\newtheorem{theorem}{Theorem}[section]
\newtheorem{lemma}[theorem]{Lemma}
\newtheorem{corollary}[theorem]{Corollary}
\theoremstyle{definition}
\newtheorem{definition}[theorem]{Definition}
\newtheorem{example}[theorem]{Example}
\theoremstyle{remark}
\newtheorem{ex-prop}[theorem]{Example-Proposition}
\numberwithin{equation}{section}
\def\real{\mathbb{R}}
\def\P{\mathbb{P}}
\def\E{\mathbb{E}}
\def\L{\mathcal{L}}
\def\M{\mathcal{M}}
\def\gN{\gamma_{\real^N}}
\def\MjN{\M_j^{\gN}}
\newcommand{\beq}{\begin{eqnarray}}
\newcommand{\eeq}{\end{eqnarray}}
\newcommand{\beqq}{\begin{eqnarray*}}
\newcommand{\eeqq}{\end{eqnarray*}}
\def\convas{\stackrel{a.s. }{\to}}
\def\:{:\,}
\newcommand{\definedas}{\stackrel{\Delta}{=}}
\def\C{\mathbb C}
\begin{document}

\title{The Intrinsic Geometry of Some Random Manifolds}
%\title{Random knots using Chebyshev billiard table diagrams}

\author{Sunder Ram Krishnan}
\address{Andrew and Erna Viterbi Faculty of Electrical Engineering, Technion -- Israel Institute of Technology, Haifa 32000, Israel}
%\texttt{\MakeLowercase{\indent }}}
\email{eeksunderram@gmail.com\newline\indent
https://sites.google.com/site/eeksunderram/}
%\thanks{The first author was supported in part by the funding from the European Research Council under the European Union's Seventh Framework Programme, Grant FP7-ICT-318493-STREP}
\thanks{Research supported in part by URSAT, ERC Advanced Grant 320422.}

\author{Jonathan E. Taylor}
\address{Department of Statistics, Stanford University, Stanford, CA, 94305-4065}
\email{jonathan.taylor@stanford.edu\newline\indent http://www-stat.stanford.edu/\%7Ejtaylor/}
\thanks{Research supported in part by SATA,   AFOSR, FA9550-11-1-0216.}

\author{Robert J. Adler}
\address{Andrew and Erna Viterbi Faculty of Electrical Engineering, Technion -- Israel Institute of Technology, Haifa 32000, Israel}
\email{robert@ee.technion.ac.il\newline\indent http://webee.technion.ac.il/people/adler/}
\thanks{Research supported in part by SATA II, AFOSR FA9550-15-1-0032 and
   URSAT, ERC Advanced Grant 320422.}

\begin{abstract}
We study the a.s.\ convergence of a sequence of random embeddings of a fixed manifold into Euclidean spaces of increasing dimensions.  We show that the limit is deterministic. As a consequence, we show that many intrinsic functionals of the
embedded manifolds also converge to deterministic limits. Particularly interesting examples of these functionals are  given by the  Lipschitz-Killing curvatures, for which we also prove  unbiasedness,  using the Gaussian kinematic formula.
\end{abstract}

\keywords{Gaussian process; manifold; random embedding; intrinsic functional; asymptotics}
\subjclass[2000]{60G15; 57N35; 60D05; 60G60; 70G45}

\maketitle

\section{Introduction}
\label{sec:intro}
In the recent paper \cite{Krishnan} we studied the limiting behaviour of the global reach of a  sequence of random manifolds embedded in Euclidean spheres of increasing dimensions. To be precise, we proved that the global reaches of these  random manifolds  converge, almost surely (a.s.), to a  deterministic constant that had arisen earlier in other scenarios, specifically in the theory of Gaussian extremes.  In this paper we look more closely at these random embeddings, and show that the results of \cite{Krishnan} can be extended to show the convergence   not only of the reaches of the embedded manifolds, but, in an appropriate sense, of the manifolds themselves, along with their induced Riemannian structures.

More specifically, we consider the following setup,  effectively equivalent to that in \cite{Krishnan}. We start with a centered, unit variance, smooth Gaussian process $f$ on a compact, smooth manifold $M$ (the precise assumptions on $f$ and $M$ are stated in the following section). We let $f_1,f_2,\dots$ be a sequence of independent copies of $f$, 
set $f^k=(f_1,\dots,f_k)$, and define an embedding $h^k$ of $M$ into $\mathbb{R}^k$ by
\begin{equation}
h^k(x)\ = \ \frac{1}{\sqrt{k}}f^k(x) \ =\ \frac{1}{\sqrt{k}}(f_1(x),\cdots,f_k(x)),
\label{eq:embed}
\end{equation}
for all $x\in M$. 
By the Whitney embedding theorem and regularity assumptions on $f$ and $M$ to follow, we are assured of an a.s.\
 embedding as long as  $k$ is large enough. ($k>2\text{dim}(M)$ will suffice.)
 
Our initial aim was to analyse the limiting behaviour of certain  functionals  defined on the random, embedded 
manifolds $h^k(M)$. In particular, if we equip each  $h^k(M)$ with the Riemannian metric,  $g^k_E$ say, that it inherits as a subset of
$\mathbb{R}^k$, then we were particularly interested in intrinsic functionals; viz.\ those that depend only on the metric. The basic question was whether or not such functionals would  converge to the corresponding intrinsic functional evaluated on $(M,g)$, for an appropriately chosen metric $g$ on $M$.

Choosing $g$ correctly, this turns out to be true, and the underlying reason  is 
 the fact that the Riemannian manifolds $(h^k(M),g^k_E)$ themselves converge, in an appropriate sense, to $(M,g)$. 
 
 In the following section we make the notions of ``correctly" and ``in an appropriate sense" precise, by describing some basic results on Gaussian processes and the convergence of manifolds. There we also state the 
 main result of the paper, Theorem \ref{thm:main}, about the convergence of the random 
 Riemannian manifolds  $(h^k(M),g_E^k)$.  The a.s.\ convergence of a family of intrinsic functionals to deterministic constants follows  as a corollary. 
 
 In Section \ref{sec:GKF}
 we focus on  a particular family of functionals, such as  volume and  surface area,   that come under the title of `Lipschitz-Killing curvatures' (LKCs), and describe their convergence to their `intuitive' limits. We also note that the a.s.\ limit in this case is also the 
 ($k$-independent) expected value of the corresponding LKC of each of the random manifolds $h^k(M)$. In other words, we show the unbiasedness of the LKCs. 
 
Section \ref{sec:proof}
contains the proof of Theorem \ref{thm:main} and its corollary, and the final Section \ref{sec:LKC} contains
 the proofs of the results in Section  \ref{sec:GKF}.  
%
% of the above will be made mathematically precise in the sections to follow. First, we present minimal details on the topics of Gaussian processes on manifolds and manifold convergence in Section \ref{sec:prelim}, and follow it up with statements of our main results. The geometry necessary to follow the paper is presented in a separate section (cf. Section \ref{sec:GKF}). Those familiar with curvature integrals and the GKF may skip this portion. In the following Section \ref{sec:proof}, we prove the claim of the main theorem and an important corollary. The result of this corollary is illustrated in Section \ref{sec:LKC} by considering  a.s. convergence of the sequence of LKCs, where we also show that these functionals are unbiased.

We shall not say much about motivation in this paper. In \cite{Krishnan} we discussed, in the context of reach, our reasons for studying random manifolds,  many of which came from questions arising in theorems about learning the homology of  manifolds from point cloud data sampled from them. While the discussion there centered on the reach of the  $h^k(M)$ (or, more precisely, a version of the $h^k(M)$ embedded in spheres) it applies equally well to the issues treated in this paper. Thus we refer the interested reader to \cite{Krishnan} for details.

\section{Some preliminaries}
\label{sec:prelim}
Before we can state our main result, we need to set up some notation and quote some basic results relating to  Gaussian processes on manifolds and to the convergence of Riemannian manifolds.

To start, we shall assume that the $m$-dimensional  Riemannian manifold $(M,g)$ is  $C^3$, connected, oriented, boundaryless,  and compact, so that it has a finite atlas. That is, $M$ can be covered by a finite number  of open sets $\Omega_i$, and there exist smooth, one to one maps $\varphi_i\:\Omega_i\to U_i\subset \real^m$, 
% or patches, denoted by $\Omega_1,\cdots,\Omega_N$.
for $i=1,\dots,N$. When working in charts on $M$,  $\left(\frac{\partial}{\partial x^1},\cdots,\frac{\partial}{\partial x^m}\right)$ denotes a coordinate basis for the tangent space $T_x M$. We use the standard notation $g_{ij}=g\left(\frac{\partial}{\partial x^i},\frac{\partial}{\partial x^j}\right),\,\,1\leq i,j\leq m$. $\nabla$ denotes the Levi-Civita connection of $(M,g)$, and $\nabla^2$  the corresponding covariant Hessian.  Note that, 
when convenient, we shall adopt Einstein summation conventions.

\subsection{Gaussian processes on Riemannian manifolds}
A zero mean, real valued Gaussian process, $f:M\rightarrow\mathbb{R}$, is determined by its covariance function $\C\:M\times M\to\mathbb{R}$ given by
$$
\C(x,y)\ = \ E\{f(x)f(y)\},
$$ 
which is assumed to be positive definite on $M\times M$ and smooth enough so that the sample paths of $f$ are a.s.\ $C^3$ on $M$. We also assume that the joint distributions of $f$ and its derivatives are non-degenerate. From Corollary 11.3.5 in \cite{Adler}, this implies that the sample paths of $f$ are a.s.\ Morse over $M$. 

Such processes induce a Riemannian metric, $g^\C$, on the tangent bundle $T(M)$ of $M$, defined by  
\begin{equation}
g^\C_x(X,Y)\ \definedas\ \E \{(Xf)(x)\, (Yf)(x)\}\  =\ Y_y X_x \C(x,y)\big|_{y=x},
\label{eq:indmet}
\end{equation}
where $X,Y$ are vector fields with values $X_x,Y_x\in T_x M$. The assumptions above on $\C$, particularly its  positive definiteness,  guarantee that $g^\C$ is a non-degenerate, well defined metric. We call $g^\C$ as the {\it metric induced by $f$} \cite{Adler}.

Throughout this paper we shall assume that $g\equiv g^\C$, which we do either by starting with the Riemannian manifold $(M,g)$ and then choosing the Gaussian process appropriately, or by starting with $M$ and $\C$, and then choosing $g$ as $g^\C$. 
Thus, from now on, we shall use only the metric $g$, and assume that it is also the one induced by $\C$.
% 
%In the scenario where no metric is specified on $M$, we assume that it is endowed with the process induced metric. On the other hand, if we are given a pair $(M,g)$, the Gaussian field is so chosen that the metric induced by (\ref{eq:indmet}) is exactly $g$ (cf.\ proof of Theorem 12.6.1 in \cite{Adler}). 
This notation, and the smoothness assumptions above on $f$ and $M$,  are assumed to hold throughout the paper. 

%As was mentioned in Section \ref{sec:intro}, our main result (cf. Theorem \ref{thm:main} below) talks about the convergence of  manifolds. This convergence is what we explain next.

\subsection{Convergence of Riemannian manifolds}	
To define the convergence of a sequence of Riemannian manifolds $(M_k,g_k)$ to a limit $(M,g)$, we follow Section 10.3 of  \cite{Petersen}, applied to our situation, in which all manifolds are compact.  (Consequently we do not require the notion of `pointed' manifolds, which appears  in \cite{Petersen}.) 

We start with a norm from which follows a notion of function space convergence for real valued functions, $u\:M\to\real$.
%The topology for  convergence is defined is a function space topology involving derivatives. 
%We take the usual uniform norms of the function and its derivatives up to the $i$th order.
With  $\{(\Omega_\ell,\varphi_\ell)\}_{\ell=1}^N$ an atlas for $M$, adopt  multi-index notation 
 ${j}=(j_1,\cdots,j_m)$,  $|{j}|=j_1+\cdots+j_m$, to write,  for $u\:\Omega_\ell\to\real$, 
\beqq
\partial^j u\ =\ \partial^{j_1}_{1} \cdots\partial^{j_m}_{m} u= \frac{\partial^{|j|}u}{\partial(x^{1})^{j_{1}} \cdots\partial(x^{m})^{j_{m}}}.
\eeqq
We then define the $C^i$ norm of $u$ on $M$ as
\begin{equation}
\|u\|_i \ =\ \ \max_{1\leq\ell\leq N} \left( \sup_{x\in\Omega_\ell}|u(x)|+ \sum_{1\leq |j|\leq i}\ \sup_{x\in\Omega_\ell} \left|\partial^j u(x)\right|  \right).
\label{eq:norm}
\end{equation}
When there is no possibility of confusion, we shall typically not write the index; i.e.\ we shall write $\|u\|$ rather than $\|u\|_i$.
%The corresponding  $C^i$ norm for functions  on manifolds, $f\:M\to\real$, with the atlas described above, is then defined as
%\beq
%\label{norm}
%\|u\|_{i,M} \  = \max_\ell \left\|   \varphi_\ell^*u      \right\|_{i,\Omega_\ell},
%\eeq
%where $\varphi_\ell^*u$ is the pullback of $u$ of the $\ell$-th patch to $\Omega_\ell$. 

We can now formulate two definitions.

\begin{definition}
\label{def:gconverge}
A sequence of Riemannian metrics  $g_k$ on a $C^i$ manifold $M$ is said to converge in the $C^i$ topology to a metric $g$ if the real valued functions  $(g_k)_{ij}$ converge to the $g_{ij}$  on $M$, in the  $C^i$ topology.
\label{def:converge}
\end{definition}

\begin{definition}
\label{def:Mconverge}
A sequence of compact, $C^i$,  Riemannian manifolds $(M_k,g_k)$ is said to converge in the $C^i$ topology to a $C^i$ manifold $(M,g)$ if, for large enough $k$,  we can find $C^i$ embeddings $H_k \: M\rightarrow M_k$ such that the pullbacks $H_k^*g_k$ converge to  $g$ on $M$ in the  $C^i$ topology.\label{def:converge}
\end{definition}

As shown in \cite{Petersen}, neither of the above notions of  convergence is dependent on the choice of atlas. Furthermore, treating the manifolds as metric spaces with the metric being Riemannian distance, the second definition implies 
Gromov-Hausdorff convergence.

In our scenario, one can take the embeddings $H_k=h^k$ so that $M_k=h^k(M)$ with the Euclidean metric in $\mathbb{R}^k$ it inherits, implying $g_k=g^k_E$.  We now have all the background we need for stating our first theorem.

\section{Convergence of Gaussian manifolds}
\label{cgm}
\subsection{The main results}

\begin{theorem}
Let $(M,g)$ be a connected, orientable, compact, $C^3$ Riemannian manifold, and $f\:M\to\real$ a zero mean Gaussian process
with a.s.\ $C^3$ sample paths inducing the metric $g$. Let  $h^k:M\rightarrow\mathbb{R}^k$ be the embedding of $M$ defined
by \eqref{eq:embed}, and $g_E^k$ denote the metric induced on $h^k(M)$ by the Euclidean metric in $\mathbb{R}^k$. Then,
with probability one,
\beq
\left(h^k(M),\,g^k_E\right)\ \stackrel{C^2}{\longrightarrow}\ (M,g),
\label{main:convergence}
\eeq
where the convergence  is as in  Definition \ref{def:converge}.
\label{thm:main}
\end{theorem}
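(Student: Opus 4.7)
The plan is to use the embeddings $H_k = h^k$ themselves (as suggested just before the theorem) and to show that the pullback metrics $(h^k)^{*} g^k_E$ converge to $g$ on $M$ in the $C^2$ topology, chart by chart. In a coordinate patch with basis fields $\partial_i = \partial/\partial x^i$, the pullback of the Euclidean metric under $h^k = \frac{1}{\sqrt k}(f_1,\dots,f_k)$ has components
\[
((h^k)^{*}g^k_E)_{ij}(x)\ =\ \left\langle \partial_i h^k(x), \partial_j h^k(x)\right\rangle_{\real^k}\ =\ \frac{1}{k}\sum_{\ell=1}^{k} \partial_i f_\ell(x)\, \partial_j f_\ell(x).
\]
By definition \eqref{eq:indmet} of the induced metric $g = g^{\C}$, we have $g_{ij}(x) = \E\{\partial_i f(x)\,\partial_j f(x)\}$. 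So writing $F^{ij}_\ell(x) \definedas \partial_i f_\ell(x)\,\partial_j f_\ell(x)$, the statement $(h^k)^{*}g^k_E \to g$ in $C^2$ reduces to a strong law of large numbers for the i.i.d.\ random fields $F^{ij}_\ell$, taking values in the Banach space $C^2(\overline{U})$ for each chart domain $U$.

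Once set up this way, the proof splits naturally into two steps. First, I would invoke the SLLN in the separable Banach space $C^2(\overline{U})$: if $\E \| F^{ij}_1 \|_{C^2(\overline{U})} < \infty$, then $\frac{1}{k}\sum_\ell F^{ij}_\ell \to \E F^{ij}_1 = g_{ij}$ almost surely, with convergence in $C^2$ norm — in particular, uniform convergence of the $g_{ij}$ together with all their partials up to order two. Second, I would verify the integrability hypothesis using Gaussian tools: since $F^{ij}_\ell$ is a product of first derivatives of $f_\ell$, two derivatives of $F^{ij}_\ell$ involve derivatives of $f_\ell$ up to order three, and a crude product estimate yields $\|F^{ij}_1\|_{C^2(\overline{U})} \leq C\,\|f_1\|_{C^3(\overline{U})}^2$. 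The $C^3$ assumption on $f$ together with Borell--TIS (or Fernique's inequality) applied to each of the finitely many derivative processes of order $\leq 3$ on the compact $\overline{U}$ implies $\E \|f_1\|_{C^3(\overline{U})}^p < \infty$ for every $p\geq 1$, so in particular $\E\|F^{ij}_1\|_{C^2(\overline{U})} < \infty$.

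Finally, since the atlas $\{(\Omega_\ell,\varphi_\ell)\}_{\ell=1}^{N}$ of $M$ is finite and there are only finitely many chart components $g_{ij}$ to control in each chart, the intersection of the (full-probability) events on which the SLLN holds is itself of full probability. On this event, $(h^k)^{*}g^k_E \to g$ in $C^2$ on each chart, hence in the sense of Definition \ref{def:gconverge}, which is exactly the condition in Definition \ref{def:Mconverge} for $(h^k(M), g^k_E) \to (M,g)$ in the $C^2$ topology.

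The main obstacle is the second step: one needs the convergence to be uniform in $x$ together with derivatives, not merely pointwise. The pointwise SLLN is immediate from the classical SLLN applied at each $x$, but to upgrade it to $C^2$ convergence one must either invoke a Banach-space SLLN with the right moment bound, or run a direct argument via equicontinuity and a union bound over a countable dense set. I would take the Banach-space route since the Gaussian moment estimates make the integrability hypothesis essentially free. An alternative I would consider if the direct product bound $\|F^{ij}\|_{C^2}\leq C\|f\|_{C^3}^2$ turns out to be awkward in coordinates on a manifold is to reduce to a covering by coordinate neighborhoods with relatively compact closure where the coefficients of the chart transition are bounded, so that all constants absorbing transition derivatives are uniform across the (finite) atlas.
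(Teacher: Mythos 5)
Your proposal is correct and follows essentially the same route as the paper: identify the pullback metric components as the empirical average $\frac{1}{k}\sum_\ell \partial_i f_\ell\,\partial_j f_\ell$ with mean $g_{ij}$, and apply the strong law of large numbers in the separable Banach space $C^2$ (the paper cites Corollary 7.10 of Ledoux--Talagrand), verifying the moment hypothesis via Borell--TIS applied to the derivatives of $f$ up to order three. The only cosmetic difference is that the paper writes out the first and second derivatives of $\partial_i f\,\partial_j f$ explicitly in covariant form before bounding them, whereas you use the cruder (but sufficient) estimate $\|F^{ij}\|_{C^2}\leq C\|f\|_{C^3}^2$ over each chart of the finite atlas.
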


The a.s.\ convergence of intrinsic functionals, described in the Introduction, will  now follow as a simple corollary of Theorem \ref{thm:main}, once we have the right definitions. To this end, let $M$ be a compact, $C^i$ manifold, and $\mathcal G^i$ the collection of all $C^i$ metrics on $M$, with the topology induced by the convergence in Definition \ref{def:gconverge}. We say that   $F_M\:\mathcal G^i\to\real$ is a $C^i$ intrinsic functional on $M$ if it is continuous with respect to this topology. 

\begin{corollary}
Retain the notation and assumptions of Theorem \ref{thm:main}, and let $F_{M}$ be a $C^2$ intrinsic functional $F$ of  $M$. 
Then 
\beq
F_{h^k(M)}\left(g_E^k\right)\ \stackrel{a.s.}{\longrightarrow}\ F_M(g).
\label{cor:intrin}
\eeq
\end{corollary}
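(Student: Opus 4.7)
The plan is to deduce the corollary directly from Theorem \ref{thm:main} together with the two defining properties of an intrinsic functional, namely isometry invariance and continuity in the metric. By the discussion immediately preceding the theorem, one may take the embedding $H_k$ appearing in Definition \ref{def:Mconverge} to be $h^k$ itself, which is a.s.\ a $C^3$ (and hence $C^2$) embedding for all sufficiently large $k$. On the a.s.\ event guaranteed by Theorem \ref{thm:main}, the pullback metric $(h^k)^* g_E^k$ therefore defines an element of $\mathcal G^2$ and satisfies $(h^k)^* g_E^k \to g$ in the $C^2$ topology on $M$.

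Next I would exploit the fact that, by construction, $h^k\colon (M,(h^k)^* g_E^k) \to (h^k(M), g_E^k)$ is an isometry of Riemannian manifolds. Since $F$ is intrinsic --- that is, isometry invariant, which is implicit in the usage of the term and is built into the standard examples such as volume, surface area, and more generally the LKCs --- this isometry forces
\[
F_{h^k(M)}(g_E^k) \;=\; F_M\bigl((h^k)^* g_E^k\bigr)
\]
for every such $k$, a.s.

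Applying the continuity of $F_M\colon \mathcal G^2 \to \real$ (which is precisely the hypothesis that $F$ is a $C^2$ intrinsic functional on $M$) to the $C^2$-convergent sequence $(h^k)^* g_E^k \to g$ then gives $F_M((h^k)^* g_E^k) \to F_M(g)$ a.s., and combining this with the display above yields \eqref{cor:intrin}.

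The one conceptual subtlety, and what I would flag as the main obstacle, is the isometry-invariance step: the formal definition in the text only demands continuity of $F_M$ on a fixed $M$, while the corollary requires us to equate $F_{h^k(M)}(g_E^k)$ with $F_M((h^k)^* g_E^k)$, i.e.\ to view $F$ as a single rule assigning a real number to any Riemannian manifold in an isometry-invariant manner. In practice this is automatic for the functionals of interest, which are defined by universal formulas in the metric and its derivatives, but the proof implicitly reads that invariance into the word \emph{intrinsic}.
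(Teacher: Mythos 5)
Your proposal is correct and follows essentially the same route as the paper, which likewise deduces the corollary from the $C^2$ convergence of the pullback metrics $(h^k)^*g_E^k\to g$ combined with continuity of the functional (the paper's version merely adds a remark about partitions of unity for integral-type functionals). Your explicit flagging of the isometry-invariance step --- needed to identify $F_{h^k(M)}(g_E^k)$ with $F_M((h^k)^*g_E^k)$, since the paper's definition only fixes $F_M$ on a single manifold --- is a legitimate point that the paper leaves implicit here but uses openly later in its treatment of the Lipschitz--Killing curvatures.
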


Before turning to the proofs of these results, note that the main result of \cite{Krishnan}, which established the a.s.\ convergence
of the reaches of the embedded manifolds $h^k(M)$, follows from neither of these. One reason for this is that the embeddings used there were slightly different to those used in this paper, in that they were self-normalized and so mapped into spheres. The main reason, however, is that the reach has both global and local aspects, and so is not an intrinsic functional of a manifold, either in the sense of the above definition or any other reasonable replacement for it.

Another point worth noting is that the proof will show that had we only assumed $C^1$  in each place where we assumed $C^3$, this would suffice to establish \eqref{main:convergence} with sup norm convergence, which has the consequence that the mapping $(M,g)\to (h^k(M), g^k_E)$ is an  asymptotically isometric embedding. This is a result of independent interest, and already mentioned in \cite{Krishnan}. 

On the other hand, if we were to  assume $C^n$  in each place where we assumed $C^3$, this would suffice to establish \eqref{main:convergence} with $C^{n-1}$ convergence. No significant change to the proof is required. Our statement of Theorem \ref{thm:main}, in between these two extremes, was motivated by the examples we had in mind, most of which involve curvatures, and so $C^2$ functionals, but nothing beyond that.

%\begin{remark}
%\begin{enumerate}
%\item The embedding map $h^k$ defined in (\ref{eq:embed}) differs from the one we used in \cite{Krishnan} in that there we normalised by the norm of the vector $(f_1(x),\cdots,f_k(x))$, so that the resulting point sat in $S^{k-1}$. Since it is trivial that $\|(f_1(x),\cdots,f_k(x))\|/\sqrt{k}$ converges a.s. to $1$, there is actually nothing to be gained by studying the map as in \cite{Krishnan}, not to mention the unnecessarily complicated calculus involved. On the other hand, we were convinced that there was nothing to be gained in terms of computations by using the map in (\ref{eq:embed}) when we studied the a.s. behaviour of the global reach. That is, the delicate zero over zero issues there would still persist.\\
%\item The result about a.s. convergence of the global reach of $h^k(M)$ to a well known deterministic constant does not follow from Corollary \ref{cor:intrin}. This is because reach is a functional that not only incorporates local curvature information, but also the global topology such as the proximity of two points on the manifold that are geodesically far apart. Therefore, reach is not intrinsic; that is, it is not describable only in terms of the metric tensor. The value of reach depends heavily on the embedding.
%\end{enumerate}
%\end{remark}

\subsection{Proof of Theorem \ref{thm:main}}
\label{sec:proof}

Our proof will rely heavily on  standard   limit theory for Banach space valued random variables. In particular, we shall 
exploit Corollary 7.10 of \cite{Talagrand-Ledoux}, which we now 
quote for the reader's convenience. 

\begin{theorem}[\cite{Talagrand-Ledoux}, Corollary 7.10]
\label{TL:theorem}
Let $X$ be a Borel random variable with values in a separable Banach space $B$, with norm $\|\cdot\|_{\text{B}}$. Let $S_n$ be the partial sum of $n$ i.i.d. realizations of $X.$ Then, $$\frac{S_n}{n}\ \stackrel{a.s.}{\longrightarrow} \ 0,
$$
 if, and only if, $E \{\|X\|_{\text{B}}\}<\infty$ and $E \{X\}=0$.
%\label{t1}
\end{theorem}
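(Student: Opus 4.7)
The plan is to prove the two implications separately, leaning on the scalar strong law of large numbers and separability as the two main ingredients.

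For the necessity direction, I will assume $S_n/n \to 0$ a.s.\ and derive both $E\|X\|_B < \infty$ and $EX = 0$. First, observe the telescoping identity
\[
\frac{X_n}{n} \ =\ \frac{S_n}{n} \ -\ \frac{n-1}{n}\cdot\frac{S_{n-1}}{n-1},
\]
which forces $\|X_n\|_B/n \to 0$ a.s. Since the $X_n$ are i.i.d., the Borel--Cantelli lemma (second, independent version) applied to the events $\{\|X_n\|_B \geq n\}$ forces $\sum_n \P(\|X\|_B \geq n) < \infty$, and this is equivalent to $E\|X\|_B < \infty$. Once integrability is in hand, $EX$ exists as a Bochner integral; to see $EX = 0$, I will push forward by an arbitrary continuous linear functional $\phi \in B^*$ to reduce to the scalar SLLN: $\phi(S_n)/n \to \phi(EX)$ a.s., but $\phi(S_n)/n \to 0$ a.s.\ by assumption, so $\phi(EX) = 0$. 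Hahn--Banach then yields $EX = 0$.

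For the sufficiency direction, the key is a simple-function approximation combined with the scalar SLLN. Because $B$ is separable and $X$ is Borel, I may approximate $X$ in $L^1(B)$: for every $\veps > 0$ there is a simple $B$-valued random variable $X^{(\veps)}$, taking finitely many values $b_1,\dots,b_r \in B$, with $E\|X - X^{(\veps)}\|_B < \veps$. Let $X_i^{(\veps)}$ be the analogously constructed simple approximations of the $X_i$ (coupled on the same probability space by applying the same measurable map), and let $S_n^{(\veps)} = X_1^{(\veps)} + \dots + X_n^{(\veps)}$. For the simple case, writing $X^{(\veps)} = \sum_{j=1}^r b_j \mathbf{1}_{A_j}$, the finite-dimensional SLLN (applied to the real-valued indicator frequencies) gives $S_n^{(\veps)}/n \to EX^{(\veps)}$ a.s.\ in $B$. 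Simultaneously, the scalar SLLN applied to the i.i.d.\ real sequence $\|X_i - X_i^{(\veps)}\|_B$ gives
\[
\frac{1}{n}\sum_{i=1}^n \|X_i - X_i^{(\veps)}\|_B \ \stackrel{a.s.}{\longrightarrow}\ E\|X - X^{(\veps)}\|_B \ <\ \veps.
\]

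Combining these, on an event of probability one,
\[
\limsup_n \left\|\frac{S_n}{n}\right\|_B \ \le\ \|EX^{(\veps)}\|_B \ +\ \veps \ \le\ \|EX - EX^{(\veps)}\|_B + \veps\ <\ 2\veps,
\]
using $EX = 0$ and $\|EX - EX^{(\veps)}\|_B \le E\|X - X^{(\veps)}\|_B < \veps$. Sending $\veps \downarrow 0$ along a countable sequence gives $S_n/n \to 0$ a.s.

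The main obstacle is the sufficiency direction, and specifically the need to reduce the Banach-valued SLLN to the scalar one without losing anything in the approximation. Separability of $B$ is precisely what makes the simple-function approximation available, and the triangle inequality together with the scalar SLLN for $\|X_i - X_i^{(\veps)}\|_B$ is what controls the tail of the approximation error uniformly in $n$. Everything else is bookkeeping with the Bochner integral, Hahn--Banach, and the Borel--Cantelli lemma.
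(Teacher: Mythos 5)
Your proof is correct, but there is no internal argument to compare it with: the paper does not prove this statement at all, quoting it verbatim as Corollary 7.10 of \cite{Talagrand-Ledoux} ``for the reader's convenience'' and then using it as a black box in the proof of Theorem \ref{thm:main}. What you have reconstructed is essentially Mourier's classical proof of the Banach-space strong law, and it is complete. In the necessity direction, the telescoping identity, the second (independence) half of the Borel--Cantelli lemma applied to the events $\{\|X_n\|_B\geq n\}$, and the reduction to the scalar SLLN through functionals $\phi\in B^*$ followed by Hahn--Banach are exactly the right tools. In the sufficiency direction, the simple-function approximation in $L^1(B)$ is available precisely because $B$ is separable (so $X$ is strongly measurable and Bochner integrable), and the one genuinely delicate point --- that the approximants must be built as $X_i^{(\veps)}=\psi(X_i)$ for a single finitely-valued measurable map $\psi$, so that both $(X_i^{(\veps)})_i$ and $(\|X_i-X_i^{(\veps)}\|_B)_i$ are again i.i.d.\ sequences to which the scalar SLLN applies --- is one you explicitly flag and handle; taking $\veps\downarrow 0$ along a countable sequence then disposes of the null sets correctly. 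For comparison, in \cite{Talagrand-Ledoux} this corollary is deduced from a more general equivalence (their Theorem 7.9, relating almost sure convergence of $S_n/n$ to convergence in probability under $E\|X\|_B<\infty$), whose proof rests on symmetrization and maximal inequalities of L\'evy and Hoffmann--J\o rgensen type. Your route buys elementarity and self-containedness, needing only the scalar SLLN, basic Bochner-integral facts, and separability; the route in \cite{Talagrand-Ledoux} buys generality, since that machinery yields limit theorems well beyond the i.i.d.\ mean-zero setting needed here. Either way, your argument fully supports the way the theorem is applied in the paper.
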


To apply this in our setting, recall Definition \ref{def:converge} of convergence of a sequence of compact manifolds and the fact that we work in coordinate patches denoted by $\Omega_1,\cdots,\Omega_N$ on $M$. We are interested in proving that $((h^k)^*g^k_E)_{ij}\convas g_{ij}$ in the $C^2$ topology (cf.\ \eqref{eq:norm}). At $x\in M$, the components of the pullback tensor in the coordinate frame $\left(\frac{\partial}{\partial x^1},\cdots,\frac{\partial}{\partial x^m}\right)$ are
\begin{eqnarray}
((h^k)^*g^k_E)_{ij}(x)&=&(h^k)^*g^k_E\left(\frac{\partial}{\partial x^i},\frac{\partial}{\partial x^j}\right)\nonumber\\
&=&g^k_E\left(h^k_\ast\frac{\partial}{\partial x^i},h^k_\ast\frac{\partial}{\partial x^j}\right)\nonumber\\
&=&\frac{1}{k}\sum_{\ell=1}^k\frac{\partial f_\ell(x)}{\partial x^i}\frac{\partial f_\ell(x)}{\partial x^j}.
\label{eq:pullback}
\end{eqnarray}
An immediate consequence of this and the fact that $g$ is the induced metric for $f$ (cf.\ \eqref{eq:indmet} and the discussion following it) is that 
\beq
\label{mean:equn}
\E\left\{(h^k)^*g^k_E)_{ij}(x)\right\} \ = \ g_{ij}(x),
\eeq
for all $x\in M$.

To apply Theorem \ref{TL:theorem} in our setting, take 
\beq
\label{D1}
X\ =\ \frac{\partial f}{\partial x^i}\frac{\partial f}{\partial x^j} - g_{ij}, 
\eeq
and set the Banach space $B$ to be $C^2(M)$ (twice continuously differentiable functions over $M$) along with the norm given by (\ref{eq:norm})). 

Then the mean zero condition of  Theorem  \ref{TL:theorem}  is trivial, and we need only show the finiteness of 
$\E\{\|X\|\}$. This norm depends on the derivatives of $X$ up to second order,  and so, at the risk of being accused of  
being overly pendantic, we write out what the random parts of these derivatives actually are. (The non-random parts involve derivatives of  $\C$, and since it  and its derivatives are assumed to be uniformly continuous over  $M$
there is nothing to check here.)

Performing covariant differentiation with respect to the vector field $\frac{\partial}{\partial x^p}$, it is easily seen that the first order derivative equals
\beq
\label{D2}
\nabla^2 f(x)\left(\frac{\partial}{\partial x^p},\frac{\partial}{\partial x^i}\right)\frac{\partial f(x)}{\partial x^j}+\frac{\partial f(x)}{\partial x^i}\nabla^2 f(x)\left(\frac{\partial}{\partial x^p},\frac{\partial}{\partial x^j}\right),
\eeq
where we use the 2-form notation for the covariant Hessian, and remind the reader that $\nabla$ is the Levi-Civita connection associated with $g$.

Recalling the definition of the covariant Hessian,  $\nabla^2f(X,Y)=g(\nabla_X\nabla f,Y)$, we obtain the following expression for the typical second order derivative:

%\begin{equation*}
%\bigg[\left(\frac{\partial}{\partial x^q}\frac{\partial}{\partial x^p}\frac{\partial}{\partial x^i}-\frac{\partial}{\partial x^q}\nabla_{\frac{\partial}{\partial x^p}}\frac{\partial}{\partial x^i}\right)f(x)+2\nabla^2 f(x)\left(\frac{\partial}{\partial x^p},\frac{\partial}{\partial x^i}\right)\nabla^2 f(x)\left(\frac{\partial}{\partial x^q},\frac{\partial}{\partial x^j}\right)
%\end{equation*}
%\begin{equation*}
%\quad+\left(\frac{\partial}{\partial x^q}\frac{\partial}{\partial x^p}\frac{\partial}{\partial x^j}-\frac{\partial}{\partial x^q}\nabla_{\frac{\partial}{\partial x^p}}\frac{\partial}{\partial x^j}\right)f(x)\bigg].
%\end{equation*}

\beq
\label{D3}
&&\left(\frac{\partial}{\partial x^q}\frac{\partial}{\partial x^p}\frac{\partial}{\partial x^i}-\frac{\partial}{\partial x^q}\nabla_{\frac{\partial}{\partial x^p}}\frac{\partial}{\partial x^i}\right)f(x)\frac{\partial f(x)}{\partial x^j}
\\ &&\qquad\qquad+\nabla^2 f(x)\left(\frac{\partial}{\partial x^p},\frac{\partial}{\partial x^i}\right)\nabla^2 f(x)\left(\frac{\partial}{\partial x^q},\frac{\partial}{\partial x^j}\right)
\nonumber
\\ \nonumber
&&\qquad\qquad\qquad+\nabla^2 f(x)\left(\frac{\partial}{\partial x^q},\frac{\partial}{\partial x^i}\right)\nabla^2 f(x)\left(\frac{\partial}{\partial x^p},\frac{\partial}{\partial x^j}\right)
\\
&&\qquad\qquad\qquad \qquad+\frac{\partial f(x)}{\partial x^i}\left(\frac{\partial}{\partial x^q}\frac{\partial}{\partial x^p}\frac{\partial}{\partial x^j}-\frac{\partial}{\partial x^q}\nabla_{\frac{\partial}{\partial x^p}}\frac{\partial}{\partial x^j}\right)f(x) . \nonumber
\eeq

The norm, $\|X\|_B$, that we need now involves taking the supremum norm of each expression in \eqref{D1}--\eqref{D3} over a chart, summing over $p$ and $q$, and then taking the maximum over all charts. However, despite the complicated expressions here, all that appears are derivatives, of up to third order, of the Gaussian process $f$, which we have assumed to have a.s.\ continuous (Gaussian!) derivatives of up to order three. It thus immediately follows from (occasionally multiple) applications of 
the Cauchy-Schwarz inequality, along with the Borel-Tsirelson-Ibragimov-Sudakov inequality (e.g\  \cite{Adler}, Theorem 2.1.2), that $\E\|X\|_B<\infty$, with room to spare.  (In fact, the BTIS inequality gives the finiteness of exponential moments of $X$.)

This finiteness, along with Theorem  \ref{TL:theorem}, completes the proof of Theorem \ref{thm:main}.

%
%
%and the sentences before it.). Similar arguments lead us to the end of the proof of the following a.s. convergence in $C^2(M)$ for arbitrary, but fixed $i,j$ in each chart:
%$$((h^k)^*g^k_E)_{ij}(x)\stackrel{a.s.}{\longrightarrow}g_{ij}(x),\,\,\,\text{uniformly in $x\in M$}.$$
%
\subsection{Proof of Corollary \ref{cor:intrin}}
From Theorem \ref{thm:main}, it is now trivial that a functional $F$ continuously dependent only on the Riemannian metric and its first and second derivatives converges a.s. in each chart. If the functional involves integrating over the whole of $M$, we simply resort to the standard partition of unity argument to lift local results to the global scenario in conjunction with one of the convergence theorems from the theory of Lebesgue integration.  (This is illustrated by the example of the LKCs in the next section.).

%Presently, we follow the path laid out by Corollary \ref{cor:intrin} for the case of $\mathcal{L}_j(h^k(M))$.

\section{Lipschitz-Killing curvatures and the Gaussian kinematic formula}
\label{sec:GKF}
In this section we will give a cursory introduction to LKCs and the Gaussian kinematic formula (GKF), with the aim of making the results of the following section meaningful.  A full theory of both LKCs and the GKF can be found in \cite{Adler}, or the more user friendly Saint-Flour notes \cite{stflour}.

\subsection{Lipschitz-Killing curvatures}
Nice Euclidean sets $A$ of dimension $N$ have $N+1$ LKCs, $\L_0(A),\dots,\L_N(A)$. Of these, $\L_N(A)$ is the $N$-dimensional volume of $A$,   $\L_{N-1}(A)$ is proportional to its  $(N-1)$-dimensional surface area, and $\L_0(A)$ is its Euler characteristic.  The remaining LKCs are somewhat harder to describe, although, in a somewhat ill defined sense, they are often considered to be  measures of `the $k$-dimensional size' of $A$. Perhaps the easiest way to introduce them is via a tube formula of the form
\beq
\lambda_N(\text{Tube}(A,\rho))\ =\ \sum_{j=0}^N\rho^{N-j}\omega_{N-j}\mathcal{L}_j(A).
\label{tube}
\eeq
Here $\lambda_N$ is Lebesgue measure in $\real^N$, the `tube' Tube($A,\rho$) around $A$ is the set of all points in $\real^N$ of distance not more than $\rho$ from 
$A$,  and $\omega_{N-j}$ is the volume of the unit ball in $\mathbb{R}^{N-j}$.  The tube formula \eqref{tube} holds for all $\rho$ less than the reach of $A$, where the reach is precisely the object we studied in \cite{Krishnan}. 
The expansion \eqref{tube} holds for a large class of nice sets (such as locally convex, Whitney stratified submanifolds in $\real^N$), and so provides a definition of the LKCs. However, when $A$ is a smooth, $m$-dimensional manifold, $M$, satisfying the conditions of this paper,  there is also a rather simple, direct, integral representation of the LKCs, given by 
\begin{equation}
\mathcal{L}_j(M)= \begin{cases} \frac{(-2\pi)^{-(m-j)/2}}{\left(\frac{m-j}{2}\right)!}\int_M\text{Tr}(R^{(m-j)/2})\text{Vol}_{g} &\mbox{if $m-j$ is even}  \\ 
0 & \mbox{if $m-j$ is odd}. \end{cases} 
\label{eq:LKC}
\end{equation}
Here Vol$_{g}$ denotes the volume form on $(M,g)$, where $g$ is the Riemannian metric induced on $M$ by its embedding in Euclidean space,  and $R$ is the 
 Riemannian curvature tensor.  Since $R$ can be considered as a double form of type $(2,2)$, it  makes  sense 
  to talk about its powers, and  their trace, Tr. (Details can be found in  Chapters 7--10 of  \cite{Adler}.)

One of the first points to note from the representation \eqref{eq:LKC} is that since the integral depends only on the volume form, determined by the metric $g_E$, and the curvature tensor $R$, LKCs are intrinsic functionals of 
$M$, dependent  on $g_E$  through its first two derivatives. The second point is that 
there is nothing particularly Euclidean about the integral in \eqref{eq:LKC} and so we could use this as a definition of $\L_j(M)$ for an Riemannian manifold $(M,g)$. In this case, however, the LKCs need not be related to a tube formula such as \eqref{tube}.
For more on LKCs in this more general setting, see either \cite{Adler} or the more recent and extensive results on valuations in, for example, \cite{AlFu}.

%
%
%\begin{remark}
%\item It is a beautiful theorem in Differential Topology named the Chern-Gauss-Bonnet theorem, which proves that the intrinsic definition in (\ref{eq:LKC}) for the case of $j=0$ equals the Euler characteristic. We point out the strength of this result: You can deform the manifold $M$ in a smooth manner so that the topology does not change. The Euler characteristic, being a topological invariant, cannot change. However, the integrand in (\ref{eq:LKC}) can vary a lot! Even then, the integral is bound to equal the Euler characteristic.
%\label{rem:CGB}
%\end{remark}

\subsection{Gaussian Minkowski functionals}
In the setting of Integral Geometry it is customary to work not directly with LKCs, but rather with a renumbered and scaled version of them known as (Lebesgue) Minkowski functionals, defined by 
\begin{equation}
\mathcal{M}_j(A)\ \definedas \ j!\omega_j\mathcal{L}_{N-j}(A), \qquad j=0,\dots,N.
\label{eq:mink}
\end{equation}
In terms of these functionals, the tube formula \eqref{tube} becomes
\beq
\lambda_N(\text{Tube}(A,\rho))\ =\ \sum_{j=0}^N\frac{\rho^{j}}{j!}\mathcal{M}_j(A),
\label{Mtube}
\eeq
which is, basically, a standard (but finite!) Taylor series expansion of the tube volume as a function of $\rho$. As before, $A$ must be `nice' and $\rho$ must be small enough.

A superficially similar expansion holds if we replace the Lebesgue measure
$\lambda_N$ by the standard Gaussian measure on $\real^N$, which we denote by
 $\gN$. In this case we have the following (cf.\  \cite{Adler}  Theorem 10.9.5 and Corollary 10.9.6).
\begin{equation}
\gamma_{\mathbb{R}^N}(\text{Tube}(A,\rho))=\gamma_{\mathbb{R}^N}(A)+\sum_{j=1}^\infty\frac{\rho^j}{j!}\mathcal{M}_j^{\gamma_{\mathbb{R}^N}}(A),
\label{tubes:equ}
\end{equation}
where the   $\MjN (A)$  are defined by this expansion, for small enough $\rho$, 
 and are known as the 
{\it Gaussian} Minkowski functionals. Note that, as opposed to the regular tube formula, the expansion in the Gaussian case does not terminate after a finite number of terms. Furthermore, the Gaussian Minkowski functionals, unlike their Lebesgue counterparts, are not translation invariant.

In addition to the role they play in the GKF, which will become clear in the following subsection, the main fact that we will need about these functionals is given in the following lemma.

\begin{lemma}
For any linear subspace $S$ of codimension $n\geq 1$ in $\mathbb{R}^k$, the Gaussian Minkowski functionals satisfy,  for all $ j\geq 0$,
\beq
\label{halfspace:equn}
\mathcal{M}_j^{\gamma_{\mathbb{R}^k}}(S)\ =\ \mathcal{M}_j^{\gamma_{\mathbb{R}^n}}(\{0\}).
\eeq
\label{lma:minkid}
Furthermore, for all $j < n$,
\beq
\label{jlessthann:equn}
\mathcal{M}_j^{\gamma_{\mathbb{R}^n}}(\{0\}) \ = \ 0.
\eeq\end{lemma}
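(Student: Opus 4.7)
The plan is to attack both parts of the lemma by working directly from the defining expansion \eqref{tubes:equ}.

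For \eqref{halfspace:equn}, I would use the product structure of Gaussian measure. Choose an orthogonal decomposition $\mathbb{R}^k = S \oplus S^\perp$, and identify $S^\perp$ with $\mathbb{R}^n$ (since $\dim S^\perp = n$). Under this identification, the standard Gaussian measure factors as $\gamma_{\mathbb{R}^k} = \gamma_S \otimes \gamma_{S^\perp}$. The tube decomposes correspondingly: a point in $\mathbb{R}^k$ lies within distance $\rho$ of $S$ iff its $S^\perp$ component has norm at most $\rho$, so
\[
\operatorname{Tube}(S,\rho) \;=\; S \times B_n(0,\rho) \;=\; S \times \operatorname{Tube}(\{0\},\rho),
\]
where $B_n(0,\rho)$ is the closed $\rho$-ball in $\mathbb{R}^n$. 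Applying the product measure therefore yields
\[
\gamma_{\mathbb{R}^k}(\operatorname{Tube}(S,\rho)) \;=\; \gamma_S(S)\cdot\gamma_{\mathbb{R}^n}(\operatorname{Tube}(\{0\},\rho)) \;=\; \gamma_{\mathbb{R}^n}(\operatorname{Tube}(\{0\},\rho)).
\]
Noting that both $\gamma_{\mathbb{R}^k}(S)=0$ and $\gamma_{\mathbb{R}^n}(\{0\})=0$ (since $n\geq 1$), writing the expansion \eqref{tubes:equ} for each side and matching Taylor coefficients in $\rho$ gives $\MjN(S) = \M_j^{\gamma_{\mathbb{R}^n}}(\{0\})$ for every $j\geq 0$.

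For \eqref{jlessthann:equn}, I would compute $\gamma_{\mathbb{R}^n}(\operatorname{Tube}(\{0\},\rho))$ explicitly. Using spherical coordinates,
\[
\gamma_{\mathbb{R}^n}(B_n(0,\rho)) \;=\; \frac{1}{2^{n/2-1}\Gamma(n/2)}\int_0^\rho r^{n-1}\,e^{-r^2/2}\,dr.
\]
Expanding $e^{-r^2/2}$ as a power series and integrating term-by-term shows that this function of $\rho$ has a Taylor series at $\rho=0$ of the form $c_n\rho^n + c_{n+2}\rho^{n+2}+\cdots$; that is, every coefficient of $\rho^j$ vanishes for $0<j<n$. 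Combined with $\gamma_{\mathbb{R}^n}(\{0\})=0$ (the $j=0$ term in the convention $\M_0^{\gamma_{\mathbb{R}^n}}(\{0\}) = \gamma_{\mathbb{R}^n}(\{0\})$), comparison with \eqref{tubes:equ} forces $\M_j^{\gamma_{\mathbb{R}^n}}(\{0\}) = 0$ for all $j<n$.

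Neither step looks to be a serious obstacle. The only thing to be careful about is the constant term in the expansion \eqref{tubes:equ}: one must verify that both $\gamma_{\mathbb{R}^k}(S)$ and $\gamma_{\mathbb{R}^n}(\{0\})$ vanish in order for the coefficient-matching in part one to be unambiguous, and similarly that the $\rho=0$ value of the Gaussian ball measure is zero so that the Taylor expansion in part two really does begin at $\rho^n$. Both facts are immediate from $n\geq 1$.
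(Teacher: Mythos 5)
Your proof is correct and follows essentially the same route as the paper: both parts reduce the claim to the product decomposition $\operatorname{Tube}(S,\rho)=\operatorname{Tube}(\{0\},\rho)\times\real^{k-n}$ followed by coefficient matching in \eqref{tubes:equ}, and the vanishing for $j<n$ comes from expanding the Gaussian measure of the $\rho$-ball (the paper phrases this as the $\chi^2_n$ distribution function, which is the same integral as your spherical-coordinates computation) and observing that the series starts at order $\rho^n$. Your explicit remark about the constant terms being zero is a harmless extra check; nothing further is needed.
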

\begin{proof}
To prove \eqref{halfspace:equn} assume, without loss of generality,  that 
\beqq
S\ =\ \left\{x\in\real^k\: x_j = 0,\, j=1,\dots, n,\ x_j\in\real,\, j=n+1,\dots,k\right\},
\eeqq
so that 
\beqq
\text{Tube}(S,\rho)  \ =\ \left\{x\in\real^k\: \left\|(x_1,\dots,x_{n})\right\| \leq \rho,\ x_j\in\real,\, j=n+1,\dots,k\right\}
\eeqq
and 
\beq
\label{tubes:equ2}
\text{Tube}(S,\rho)   \ =\ \text{Tube}(\{0\},\rho)\times \real^{k-n},
\eeq
where the origin $0$ here is in $\real^n$.

Computing the Gaussian measure of both sides of \eqref{tubes:equ2} via \eqref{tubes:equ} and comparing coefficients of $\rho$ establishes  \eqref{halfspace:equn}.

As for \eqref{jlessthann:equn}, note that 
\beqq
 \gamma_{\mathbb{R}^n}(\text{Tube}(\{0\},\rho)) \ = \  \P\left\{\chi^2_n \leq \rho^2\right\},
 \eeqq
 where $\chi^2_n$ is a chi-squared random variable with $n$ degrees of freedom. The right hand side here, however, is precisely
 \beq
 \label{expand:equn}
\frac{1}{2^{n/2}\Gamma(n/2)}  \int_0^{\rho^2} x^{n/2-1}e^{-x/2}\,dx \ = \
\frac{1}{2^{n/2}\Gamma(n/2)} \sum_{\ell=0}^\infty  \frac{(-1/2)^\ell}{\ell!} \int_0^{\rho^2} x^{n/2+\ell-1}\,dx, 
\eeq
which gives a power series in $\rho$, the lowest order term of which is $O(\rho^n)$. Comparing coefficients  with the 
expansion \eqref{tubes:equ} establishes \eqref{jlessthann:equn}, as required.

% that since $S$ is a linear subspace of $\mathbb{R}^k$ of dimension $k-n$, the Gaussian measure with respect to $\gamma_{\mathbb{R}^k}$ of Tube$(S,\rho)$ is equal to the Gaussian measure of Tube$(0,\rho)$ with respect to $\gamma_{\mathbb{R}^n}$; that is, there is an averaging over $k-n$ dimensions. The result of the lemma follows from an expansion of $\gamma_{\mathbb{R}^k}(\text{Tube}(S,\rho))$ and $\gamma_{\mathbb{R}^n}(\text{Tube}(0,\rho))$ as given in (\ref{eq:Gautube}) remembering that the Gaussian density is absolutely continuous with respect to Lebesgue measure.
\end{proof}

\subsection{Gaussian kinematic formula}

We now turn to the GKF. Consider the scenario of the Introduction, specifically the (un-normalised) embedding 
$f^k\definedas (f_1,\cdots,f_k)$  of  $M$ into $\real^k$ (cf.\ \eqref{eq:embed}). Although we have assumed that $M$ was a manifold, for the remainder of this subsection  we could  actually take it to be a stratified manifold satisfying the smoothness conditions of Chapter 15  of  \cite{Adler}. Consider the preimage under $f^k$ in $M$ of a regular, stratified manifold 
$D$ in $\mathbb{R}^k$, again satisfying some smoothness conditions that are trivially satisfied if $D$ is assumed to be a compact, $C^2$, manifold.  In the context of deriving mean LKCs of the excursion sets of non-Gaussian fields on manifolds, the following formula,  nowadays referred to as the GKF, was proven in \cite{Adler}.
\begin{equation}
\E \left\{\mathcal{L}_i(M\cap (f^k)^{-1}(D))\right\}\ =\ \sum_{j=0}^{m-i}{i+j \brack j} (2\pi)^{-j/2}\mathcal{L}_{i+j}(M)\mathcal{M}_j^{\gamma_{\mathbb{R}^k}}(D).
\label{eq:GKF}
\end{equation}
where  ${a \brack b}=\binom{a}{b}\frac{\omega_a}{\omega_{a-b}\omega_b}$  are the so-called flag coefficients, and  the LKCs are computed with respect to the metric induced by $f$.

The GKF has myriad applications, but in the following section we shall add an extra, somewhat novel,  one. We shall use it to establish that 
 for fixed, but large enough $k$, and for all $j$, 
  $$\E\left\{\mathcal{L}_j(h^k(M))\right\}\ =\ \mathcal{L}_j(M).
  $$

\section{Convergence of the $\mathcal{L}_j(h^k(M))$, and their unbiasedness}
\label{sec:LKC}
We start 
%by applying Corollary \ref{cor:intrin} to show 
with the a.s.\ convergence of the random variables $\mathcal{L}_j(h^k(M))$. 

\begin{example}
Under the same setup and conditions on $M$ and $f$ as in Theorem \ref{thm:main},
\beq
\label{eq:LKC conv}
\mathcal{L}_j(h^k(M))\stackrel{a.s.}{\longrightarrow}\mathcal{L}_j(M),
\eeq
for each $0\leq j\leq m$.
\label{ex:LKC conv}
\end{example}
\begin{proof}
In view of Corollary  \ref{cor:intrin}, we only need to show that LKCs are $C^2$ intrinsic functionals. For a reader with a background in Differential Geometry, this is (under the conditions we assume) obvious, and so the proof is done.

For the reader without this background, we will provide an outline of a slightly longer proof, which will also introduce issues  relevant to later
discussions.

We start with the 
representation \eqref{eq:LKC} of LKCs, which in our case becomes, for the non-zero case in which $m-j$ is even,  
\begin{equation}
\mathcal{L}_j\left(h^k(M)\right)\ = \ K_j\int_{h^k(M)}\text{Tr}\left((R_E^k)^{(m-j)/2}\right)\text{Vol}_{g_E^k} 
\label{eq:LKC2}
\end{equation}
where $K_j={(-2\pi)^{-(m-j)/2}}/(\left(\frac{m-j}{2}\right)!)$ and $R^k_E$
denotes the curvature tensor of $(h^k(M),g^k_E)$.

However, since for large enough $k$, the embedding map $h^k$ is a diffeomorphism, it follows from the very definition of (global) isometries that $(M,(h^k)^*g^k_E)$ and $(h^k(M),g^k_E)$ are isometric Riemannian manifolds, and so 
\beq
\label{LKC:iso}
\mathcal{L}_j\left(h^k(M)\right)\ = \ K_j\int_{M}\text{Tr}\left((\widetilde R_E^k)^{(m-j)/2}\right)\text{Vol}_{\widetilde g_E^k} ,
\eeq
where we write $\widetilde g_E^k$ to denote the pullback $(h^k)^{*}g_E^k$ and $\widetilde R_E^k$ for the corresponding
curvature tensor, both on $M$.

However, the  Riemannian curvature tensor $R$ on a generic Riemannian manifold  $(M,g)$ is given by (cf.\ \cite{Petersen})
$$ 
R_{ijk \ell}\ =\ \frac{1}{2}\left(\frac{\partial^2 g_{i\ell}}{\partial x^j\partial x^k}+\frac{\partial^2 g_{jk}}{\partial x^i\partial x^\ell}-\frac{\partial^2 g_{ik}}{\partial x^j\partial x^\ell}-\frac{\partial^2 g_{j\ell}}{\partial x^i\partial x^k}\right)+g_{np}(\Gamma^n_{jk}\Gamma^p_{i\ell}-\Gamma^n_{j\ell}\Gamma^p_{ik}),
$$
where the Christoffel symbols of the second kind  are given by
$$
\Gamma^n_{jk}\ =\ \frac{1}{2}g^{n\ell}\left(\frac{\partial g_{\ell j}}{\partial x^k}+\frac{\partial g_{\ell k}}{\partial x^j}-\frac{\partial g_{jk}}{\partial x^\ell}\right),
$$
and the $g^{n\ell}$ are the elements of $G^{-1}$, where $G$ is the matrix with elements $g_{ij}$.

Returning to our current setup, since the symmetric form $g$ is nondegenerate (following from positive definiteness of $\C$)  $G$ is non-singular, and so it is clear is that the components of $R$ depend solely upon the metric tensor and its first and second order derivatives in a smooth manner.   
Since  Theorem \ref{thm:main}  implies the convergence of
$(h^k(M),g^k_E)$ to $(M,g)$, it follows that  
\beq
\label{R:conv}
(R^k_E)_{ijk\ell}(x)\stackrel{a.s.}{\longrightarrow}R_{ijk\ell}(x),\qquad \text{uniformly in $x$}.
\eeq
This, together with  \eqref {eq:LKC2},  \eqref{LKC:iso} and \eqref{R:conv} imply \eqref{eq:LKC conv}, and we are done.

%
%The LKCs are obtained by integrating powers of the trace of the curvature tensor (cf. (\ref{eq:LKC})). Since 
%(e.g.\ (7.2.4) and (7.2.6) in \cite{Adler} )  these powers and traces depend smoothly on $R$ itself, which as we already know, is a smooth function of the metric tensor and its first two derivatives in local coordinates.

%The idea then would be to use a partition of unity $\{u_i\}_{i=1}^N$ subordinate to the open cover $\{\Omega_i\}_{i=1}^N$ of $M$ defined before. Thereafter, taking $\text{Tr}((R^k_E)^p)(x)$ and $\text{Tr}((R)^p)(x)$ to mean the coordinate representations of the $p$th power of the trace: 
%$$\int_{M}\text{Tr}((R^k_E)^p)(x)dx=\int_M\left(\sum_i u_i(x)\right)\text{Tr}((R^k_E)^p)(x)dx=\sum_i\int_{\Omega_i}\left( u_i(x)\text{Tr}((R^k_E)^p)(x)\right)dx.$$
%Since we have established the uniform convergence in charts of $R^k_E$ and since the integrand only involves smooth functions of the metric tensor and its derivatives, a bounded convergence argument yields the desired convergence of LKCs, remembering that the number of charts is finite because of compactness of $M$.
\end{proof}

One of the consequences of \eqref{eq:LKC conv}, the oft-noted fact that $h^k$ is asymptotically isometric,  and some moment checking (with which we shall not bother, for reasons to soon become clear), is that, for each $0\leq j\leq m$, 
$$\lim_{k\to\infty} \E \left\{\mathcal{L}_j(h^k(M))\right\} \ = \ \mathcal{L}_j(M).$$

In fact, we can do better than this. Given the integral representation \eqref{eq:LKC2} of the LKCs of $h^k(M)$, and the subsequent explanations of what all the terms are, we could, in principle at least, take expectations and compute the mean 
$ \E \left\{\mathcal{L}_j(h^k(M))\right\} $ explicitly, for each $k$. One would not expect this calculation to be an easy one.

However, it turns out that there is no need to go this route, since the following result shows that these expectations are actually independent
of $k$, at least for $k$ large enough to ensure a true embedding. We call this  the `unbiasedness' of the LKCs. 

\begin{theorem} Under the conditions of Theorem \ref{thm:main},
for all $k$ for which $h^k$ is an embedding, and for each $0\leq j\leq m$, 
\beqq
\E \left\{\mathcal{L}_j(h^k(M))\right\}\ =\ \mathcal{L}_j(M).
\eeqq
\end{theorem}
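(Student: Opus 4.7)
The plan is to exploit the Gaussian kinematic formula (\ref{eq:GKF}) together with Lemma \ref{lma:minkid} to compute $\E\{\mathcal{L}_j(h^k(M))\}$ directly, bypassing any need to unravel the complicated random curvature tensor of the pullback metric $\tilde g^k=(h^k)^*g_E^k$.

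First I would use the intrinsic character of the LKCs: because $h^k$ is a diffeomorphism onto its image, $\mathcal{L}_j(h^k(M)) = \mathcal{L}_j(M, \tilde g^k)$, so the target quantity is a nonlinear expectation over a random metric, and a direct ``compute the expected curvature tensor'' attack looks hopeless. The point of the GKF is that it computes certain expected LKCs exactly, which is precisely what we need.

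Next I would apply the GKF to the vector process $f^k$ with target set $D$ taken to be a linear subspace $V\subset\mathbb{R}^k$ of codimension $m-j$. Because linear subspaces are preserved under scaling by $1/\sqrt k$, one has $(f^k)^{-1}(V)=(h^k)^{-1}(V)$, so the object on the image side is the random $j$-dimensional slice $h^k(M)\cap V$. Lemma \ref{lma:minkid} kills every term in the GKF expansion whose index is strictly less than $m-j$ (because $\mathcal{M}_i^{\gamma_{\mathbb{R}^k}}(V)$ vanishes there), and the GKF sum already terminates at index $m-j$, so a single term survives. This collapses $\E\{\mathcal{L}_j((f^k)^{-1}(V))\}$ to an explicit universal constant times $\mathcal{L}_m(M)$.

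The final step is integral-geometric: reconstruct $\mathcal{L}_j(h^k(M))$ from the collection of slice LKCs $\mathcal{L}_j(h^k(M)\cap V)$ via a Crofton-type formula in $\mathbb{R}^k$, using the $O(k)$-invariance of the law of $f^k$ to justify averaging $V$ uniformly over the appropriate Grassmannian. I expect the main obstacle to be the constant-bookkeeping: the flag coefficient ${m \brack m-j}$, the factor $(2\pi)^{-(m-j)/2}$, the explicit value of $\mathcal{M}_{m-j}^{\gamma_{\mathbb{R}^{m-j}}}(\{0\})$ extracted from Lemma \ref{lma:minkid}, and the Crofton normalisation must combine so that all dependence on $k$ cancels and what remains is precisely $\mathcal{L}_j(M)$. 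Verifying that cancellation is what makes the unbiasedness statement nontrivial rather than a direct corollary of the GKF alone.
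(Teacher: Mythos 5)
Your high-level instinct --- use the GKF \eqref{eq:GKF} together with Lemma \ref{lma:minkid} and random linear subspaces to avoid touching the random curvature tensor --- is exactly the strategy of the paper's proof. However, the reconstruction you propose in your final step has gaps that cannot be repaired by constant-bookkeeping. First, there is an index mismatch: if $V$ has codimension $m-j$, the slice $h^k(M)\cap V$ is generically $j$-dimensional, so $\mathcal{L}_j(h^k(M)\cap V)$ is its $j$-dimensional volume, and a Crofton-type formula integrating $\mathcal{L}_i(A\cap V)$ over planes of codimension $c$ produces $\mathcal{L}_{i+c}(A)$; with $i=j$ and $c=m-j$ you would recover $\mathcal{L}_m(h^k(M))$, the total volume, not $\mathcal{L}_j(h^k(M))$. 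To reach $\mathcal{L}_j$ by Crofton you must instead integrate Euler characteristics of codimension-$j$ slices, but then the GKF expansion for $\E\left\{\mathcal{L}_0(M\cap(f^k)^{-1}(V))\right\}$ retains \emph{every} term of index between $j$ and $m$ by \eqref{jlessthann:equn}, so the single-surviving-term collapse you rely on is unavailable except at $j=0$ and $j=m$. Second, the classical Crofton formula averages over the \emph{affine} Grassmannian with its infinite translation-invariant measure, whereas the $O(k)$-invariance of $f^k$ only supplies the uniform probability measure on linear subspaces through the origin; the correct ``Gaussian Crofton formula'' for those is precisely the GKF applied to the linear fields $x\mapsto Wx$, and it is triangular rather than diagonal. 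Third, and most seriously, for finite $k$ the map $h^k$ is not an isometry, so $\mathcal{L}_j$ of the preimage slice $M\cap(f^k)^{-1}(V)$ (computed with respect to $g$, which is what the GKF delivers) and $\mathcal{L}_j$ of the image slice $h^k(M)\cap V$ (computed with respect to $g^k_E$) need not agree for $j\geq 1$; only $\mathcal{L}_0$ transfers between the two, being a topological invariant.

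The paper's proof is built around exactly these obstructions: it works exclusively with expected Euler characteristics of intersections with $\mathrm{null}(W^n_k)$ for $n=0,\dots,m$, computes each such expectation twice via the GKF --- once on $h^k(M)\subset\real^k$ using the linear Gaussian fields $\Theta^n_k(x)=W^n_kx$, and once on $M$ using $f^k$ with target set $\mathrm{null}(W^n_k)$ --- links the two computations by Fubini and the topological invariance of $\mathcal{L}_0$, and then recovers all the $\mathcal{L}_j$ simultaneously by inverting the upper-triangular matrix $\mathcal{Z}$ arising from \eqref{eq:ident}. If you replace your Crofton step by this triangular-system inversion, and restrict your slice functionals to Euler characteristics, your argument becomes the paper's.
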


\begin{proof}
We start with some generalities.
Let $A$ be  a compact submanifold of dimension $a$, isometrically embedded in some Riemannian manifold $(\tilde{M},\tilde{g})$. Let  $\theta$ be a Gaussian random field on $\tilde{M}$ with induced metric $\tilde{g}$ satisfying the conditions of the GKF and 
define processes $\Theta^n$, for $1\leq n\leq a$,  as $\Theta^n=(\theta_1^n,\cdots,\theta_n^n)$, with the individual components being i.i.d.\ copies of  $\theta$. Then (\ref{eq:GKF}) gives us that
\begin{eqnarray}
\E \{\mathcal{L}_0(A\cap (\Theta^n)^{-1}\{0\})\}&=&\sum_{j=0}^{a}(2\pi)^{-j/2}\mathcal{L}_j(A)\mathcal{M}_j^{\gamma_{\mathbb{R}^n}}(\{0\})\nonumber\\
&=&\sum_{j=n}^{a}(2\pi)^{-j/2}\mathcal{L}_j(A)\mathcal{M}_j^{\gamma_{\mathbb{R}^n}}(\{0\}),
\label{eq:ident}
\end{eqnarray}
where the change in summation limits comes from \eqref{jlessthann:equn}.

Write $\mu_{\chi_\Theta}(A)$ for the $a+1$ vector 
\beqq
\left(\mathcal{L}_0(A),\  \E \left\{\mathcal{L}_0(A\cap (\theta^1)^{-1}\{0\})\right\},\dots,
\E \left\{\mathcal{L}_0(A\cap (\theta^n)^{-1}\{0\right\})\}
\right).
\eeqq
If we adopt the convention that $\Theta^0$ is a function that maps identically to zero,  so that  
\beqq
 \E \{\mathcal{L}_0(A\cap (\theta^0)^{-1}\{0\})\}\ = \  \mathcal{L}_0(A), 
 \eeqq
 then we can rewrite \eqref{eq:ident}, formally, as 
\beqq
\mu_{\chi_\Theta}(\cdot)\ =\  \mathcal{ZL}(\cdot),
\eeqq
where $\mathcal{L}$ maps $A$ to $(\mathcal{L}_0(A),\cdots,\mathcal{L}_{a}(A))$ and 
$\mathcal{Z}$ is a universal $(a+1)\times(a+1)$ upper triangular matrix, the precise elements of which can be found from the 
expansion \eqref{expand:equn}. It is easy to check that the diagonal elements are non-zero, but their precise values are not important for what follows. However, this does imply that $\mathcal{Z}$ is invertible, from which it follows that 
\beqq
\mu_{\chi_\Theta}\ =\ \mathcal{ZL}\quad \iff\quad  \mathcal{L}\ =\ \mathcal{Z}^{-1}\mu_{\chi_\Theta},
\eeqq
so that we can recover the LKCs $(\mathcal{L}_j(A))_{0\leq j\leq a}$ from the expected Euler characteristics  $(\E \{\mathcal{L}_0(A\cap (\Theta^n)^{-1}\{0\})\})_{0\leq n\leq a}$.
%Note that in the case of $n=0$, we already know that $\$ 

%If we now write $\Theta$, or $\Theta_a$, to denote the $a\times a$ matrix with $n$-th row $\theta^n$ followed by $a-n$ zeroes,
%then we can, at least formally, rewrite (\ref{eq:ident}) as
%$$
%\E \{\mathcal{L}_0(\cdot\cap (\Theta)^{-1}\{0\})\}=\mathcal{Z}\mathcal{L}(\cdot),
%$$
%where we understand the left hand side to be a vector made up of the expectations  on the left hand side of \eqref{eq:ident}.
%As for the right hand side, $\mathcal{L}$ maps $A$ to $(\mathcal{L}_0(A),\cdots,\mathcal{L}_{a}(A))$ and 
%$\mathcal{Z}$ is a universal $(a+1)\times(a+1)$ upper triangular matrix, the precise elements of which can be found from the 
%expansion \eqref{expand:equn}, but which are not important for what follows.
%
%%
%diagonal entries $(2\pi)^{-n/2}n!\omega_n$.
% (using (\ref{eq:mink}), $\mathcal{M}_n^{\gamma_{\mathbb{R}^n}}(\{0\})=n!\omega_n\mathcal{L}_0(\{0\})$) and $\mathcal{L}$ maps $A$ to $(\mathcal{L}_0(A),\cdots,\mathcal{L}_{a}(A))$. 

We now exploit the above  to prove the theorem. Firstly, fix $k$, large enough so that $h^k(M)$ is an embedding of $M$ in $\real^k$. 
Set $A=h^k(M)$, with dimension $a=m$. Then  $\real^k$ together with the standard Euclidean metric is the $(\tilde{M},\tilde{g})$ above. 

A simple  way to define  centered, unit variance $\mathbb{R}^n$ valued Gaussian fields $\Theta^n_k$ on $\mathbb{R}^k$ that induce the Euclidean metric is to take a $n\times k$ matrix, $W^n_k$, of i.i.d.\ standard Gaussians, and to set
\beqq
\Theta_k^n(x) \ = \ W_k^nx, \qquad x\in\real^k,
\eeqq
where all our vectors (such as $\Theta_k^n$ and $x$) are written as column vectors. 

The above general argument thus implies that we can compute $(\mathcal{L}_j(h^k(M)))_{0\leq j\leq m}$ from the expected Euler characteristics of the zero sets of $(\Theta^n_k)_{0\leq n\leq m}$ restricted to $h^k(M)$. To do this, note first 
%\footnote{There is a small notational issue here. Since $h^k\:M\to\real^k$
%is a diffeomorphism, $\mathcal{L}_0(h^k(M))=\mathcal{L}_0(M)$, and so for $n=0$ we take $\E \{\mathcal{L}_0(h^k(M)\cap (\theta^0_k)^{-1}\{0\})\}\equiv\L_0(M)$.}.
 the simple, but crucial, fact that,  for $1\leq n\leq m$,  $(\Theta^n_k)^{-1}(\{0\})=\text{null}(W^n_k)$, so that
\begin{eqnarray*}
h^k(M)\cap (\Theta^n_k)^{-1}(\{0\})&=&h^k(M)\cap \text{null}(W^n_k)\\
&=&h^k\left(M\cap(h^k)^{-1}\text{null}(W^n_k)\right)\\
&=&h^k\left(M\cap(f^k)^{-1}\text{null}(W^n_k)\right).
\end{eqnarray*}
Therefore,
\beqq
\sum_{j=0}^{m}
(2\pi)^{-j/2}\mathcal{L}_j(h^k(M))\mathcal{M}_j^{\gamma_{\mathbb{R}^n}}(\{0\})&=&\E _{\Theta^n_k}\left\{\mathcal{L}_0\left(h^k(M)\cap (\theta^n_k)^{-1}(\{0\})\right)\right\}\\
&=&\E _{W^n_k}\left\{\mathcal{L}_0\left(h^k\left(M\cap(f^k)^{-1}\text{null}(W^n_k)\right)\right)\right\}\\
&=&\E _{W^n_k}\left\{\mathcal{L}_0\left(M\cap(f^k)^{-1}\text{null}(W^n_k)\right)\right\},
\eeqq
where the first equality is a direct consequence of the GKF, the second is  from the calculations above, and the last follows from the facts that $h^k$ is a diffeomorphism and the Euler characteristic is a topological invariant.

Consequently, we have that
\begin{eqnarray*}
\sum_{j=0}^{m}(2\pi)^{-j/2}\E _{f^k}\left\{\mathcal{L}_j(h^k(M))\right\}\mathcal{M}_j^{\gamma_{\mathbb{R}^n}}(\{0\})&=&\E _{f^k}\E _{W^n_k}\left\{\mathcal{L}_0\left(M\cap(f^k)^{-1}\text{null}(W^n_k)\right)\right\}\\
&=&\E _{W^n_k}\E _{f^k}\left\{\mathcal{L}_0\left(M\cap(f^k)^{-1}\text{null}(W^n_k)\right)\right\}\\
&=&\sum_{j=0}^m (2\pi)^{-j/2}\mathcal{L}_j(M)\mathcal{M}_j^{\gamma_{\mathbb{R}^k}}(\text{null}(W^n_k)),
\end{eqnarray*}
where the first equality follows from Fubini,  and the last from the GKF. 

However, null$(W^n_k)$ is a linear subspace of codimension $n$ in $\mathbb{R}^k$ for almost every $W^n_k$, and for any linear subspace $S$ of codimension $n$ in $\mathbb{R}^k$, we have from Lemma \ref{lma:minkid} that
$$\mathcal{M}_j^{\gamma_{\mathbb{R}^k}}(S)=\mathcal{M}_j^{\gamma_{\mathbb{R}^n}}(\{0\}).$$
This results in the  identity
$$\sum_{j=0}^m (2\pi)^{-j/2}\mathcal{L}_j(M)\mathcal{M}_j^{\gamma_{\mathbb{R}^k}}(\text{null}(W^n_k))=\sum_{j=0}^m (2\pi)^{-j/2}\mathcal{L}_j(M)\mathcal{M}_j^{\gamma_{\mathbb{R}^n}}(\{0\}),$$
from which follows the fact that
$$\sum_{j=0}^{m}(2\pi)^{-j/2}\E _{f^k}\left\{\mathcal{L}_j(h^k(M))\right\}\mathcal{M}_j^{\gamma_{\mathbb{R}^n}}(\{0\})=\sum_{j=0}^m (2\pi)^{-j/2}\mathcal{L}_j(M)\mathcal{M}_j^{\gamma_{\mathbb{R}^n}}(\{0\}).$$
In matrix formulation, the above reads as
$$\E _{f^k}\{\mathcal{ZL}(h^k(M))\}=\mathcal{ZL}(M),$$
and the  theorem follows on recalling that $\mathcal{Z}$ is invertible.
\end{proof}
%\begin{remark}
%The proof above is concise thanks to the GKF. We point out that one might try to write out the expression for the LKCs of a submanifold of Euclidean space using the formula (10.5.5) in Chapter 10 of \cite{Adler}, and then try to find out its expectation. Even though filling in the small details in the formula such as the second fundamental form and so on is not difficult in our model, taking the expectation of the expression and proving that it is indeed the LKC of $(M,g)$ would be quite a handful.
%\end{remark}

\section*{Acknowledgements}
We would like to thank Sreekar Vadlamani, Shmuel Weinberger, and Yuliy Baryshnikov for useful discussions.

%%%%%%%%%%%%%%%%%%%%%%%%%%%%%%%%%%%%%%%%%%%%%%%%%%%%%%%%%%%%%%%%%%%
%%                                                               %%
%% You may add acknowledgments (optional).                       %%
%%                                                               %%
%%%%%%%%%%%%%%%%%%%%%%%%%%%%%%%%%%%%%%%%%%%%%%%%%%%%%%%%%%%%%%%%%%%

%\bibliographystyle{amsplain}
%\bibliography{yourbibfilename}

% add below the content of your .bbl file produced by bibtex.

%%%%%%%%%%%%%%%%%%%%%%%%%%%%%%%%%%%%%%%%%%%%%%%%%%%%%%%%%%%%%%%%%%%
%%                                                               %%
%% You have reached the end of your document.                    %%
%%                                                               %%
%%%%%%%%%%%%%%%%%%%%%%%%%%%%%%%%%%%%%%%%%%%%%%%%%%%%%%%%%%%%%%%%%%%

\end{document}